\newcommand{\inv}{\iota}
\newtheorem{Thm}{Theorem}
\newtheorem{Lemma}[Thm]{Lemma}
\newtheorem{Cor}[Thm]{Corollary}
\theoremstyle{remark}
\newtheorem*{Problem}{Problem}
\newtheorem*{Example}{Examples}
\theoremstyle{definition}
\newtheorem*{Def}{Definition}
\title[Homogeneous involution and graded identities]{Homogeneous involution on graded division algebras and their polynomial identities}
\author{Felipe Yukihide Yasumura}
\address{Department of Mathematics, Instituto de Matem\'atica e Estat\'istica, Universidade de S\~ao Paulo, Brazil}
\email{fyyasumura@ime.usp.br}
\subjclass{Primary 16R50; Secondary 16W50}
\keywords{Homogeneous involution, polynomial identities}
\begin{document}
\begin{abstract}
In this short note, we describe the so-called homogeneous involution on finite-dimensional graded-division algebra over an algebraically closed field. We also compute their graded polynomial identities with involution. As pointed out by L.~Fonseca and T.~de Mello, a homogeneous involution naturally appears when dealing with graded polynomial identities and a compatible involution.
\end{abstract}
\maketitle

\section{Introduction}
The main purpose of this short note is to investigate the homogeneous involution on finite-dimensional graded-division algebras over an algebraically closed field of characteristic zero and their graded polynomial identities with involution.

It is known that graded algebras play a fundamental role in several branches of Mathematics, being the main topic of research or being an important tool to understand an object. The monograph \cite{EK2013} compiles the state-of-art of the theory, where understanding the gradings is the central objective. It is worth mentioning that the classification of involutions that are compatible with a given grading, the so called \emph{degree-preserving involution} or \emph{graded involution}, is crucial in the approach.

On the other hand, an involution that inverts the degree appears naturally on several contexts, for intance, the Leavitt path algebras (see \cite{Hazrat}) and Matrix algebras endowed with an elementary grading and the transpose involution (see \cite{Mello1} for details, and see also \cite{FSY}).

Considering group gradings on the upper triangular matrices (a classification is obtained in \cite{VinKoVa2004,VZ2007}), one sees that the natural transpose with respect to the secondary diagonal sends a homogeneous element to a homogeneous element. If the grading is adequate, the natural involution will map a whole homogeneous component in another homogeneous component. So, T.~de Mello studied and described the so called \emph{homogeneous involution} on upper triangular matrix algebra \cite{Mello}.

Following the paper by T.~de Mello, we shall investigate the homogenous involutions on finite-dimensional graded division algebras over an algebraically closed field.

Next, it is known the relevance of the description of the T-ideal of all polynomial identities satisfied by a given algebra, and the related main problems of the theory. We shall describe the graded polynomial identities with homogeneous involution on the graded division algebras endowed with a homogeneous involution. The graded version is done in \cite{AljHN}, and in \cite{HN} the graded identities with an involution were computed for certain types of gradings.

Finally, it is worth mentioning the following phenomena (see \cite{Mello2}) when dealing with the free graded algebra with a homogeneous involution (we shall give the precise construction in the next section). Let $X_G=\bigcup_{g\in G}X_g$, where $X_g=\{x_1^{(g)},x_2^{(g)},\ldots\}$, for each $g\in G$. Let $\mathbb{F}\langle X,\inv\rangle$ be the free $G$-graded algebra with an involution. Then, since the unary operation $\inv$ is an antiautomorphism, one obtains $\inv(x^{(g)}x^{(h)})=\inv(x^{(h)})\inv(x^{(g)})$. If we require that $\inv$ preserves the homogeneous degree, then this equation makes sense only if the grading group is abelian. Otherwise, there should be a antihomomorphism $\tau:G\to G$ such that $\deg_G\inv(x^{(g)})=\tau(g)$. Thus, in general, it is seems to be natural to consider homogeneous involutions in the context of graded polynomial identities endowed with a compatible involution.

\section{Preliminaries}
\subsection{Graded algebra} Let $G$ be any group. We use the multiplicative notation for $G$, and denote its neutral element by $1$. We say that an algebra $\mathcal{A}$ is $G$-graded if there exists a vector-space decomposition $\mathcal{A}=\bigoplus_{g\in G}\mathcal{A}_g$ such that $\mathcal{A}_g\mathcal{A}_h\subseteq\mathcal{A}_{gh}$, for all $g,h\in G$. The choice of the decomposition is called a \emph{$G$-grading}, and one usually denotes by $\Gamma$. The subspace $\mathcal{A}_g$ is called \emph{homogeneous component of degree $g$}. A nonzero element $x\in\mathcal{A}_g$ is called a homogeneous element of degree $g$. We denote $\deg_Gx=g$. The \emph{support} of the grading $\Gamma$ is $\mathrm{Supp}\,\Gamma=\{g\in G\mid\mathcal{A}_g\ne0\}$. By abuse of language, we shall denote the support by $\mathrm{Supp}\,\mathcal{A}$. A \emph{graded division algebra} is an associative algebra $\mathcal{D}$ with $1$, where each nonzero homogeneous element $x\in\mathcal{D}$ is invertible. 

Finally, we provide a precise definition of the following:
\begin{Def}
Let $\mathcal{A}=\bigoplus_{g\in G}\mathcal{A}_g$ be a $G$-graded algebra, and let $\tau:G\to G$ be a map. An involution $\psi$ on $\mathcal{A}$ is a \emph{homogeneous involution} with respect to $\tau$ or a $\tau$-involution if $\psi(\mathcal{A}_g)\subseteq\mathcal{A}_{\tau(g)}$, for all $g\in G$.
\end{Def}
In this paper, \emph{involution} will mean an $\mathbb{F}$-linear map involution. Also, we are specially interested in the case where the map $\tau$ is an anti-automorphism of order $2$ of the grading group.

\begin{Example}\ \\
\begin{enumerate}
\item If $G$ is an abelian group, then every degree-preserving involution is a homogeneous involution with respect to the identity map of $G$.
\item A {degree-inverting involution} is a homogeneous involution with respect to the inversion of $G$. It is worth mentioning that the degree-inverting involution on matrix algebras and upper triangular matrices were described in \cite{Mello1,FSY}.
\item If $\mathcal{D}=\bigoplus_{g\in G}\mathcal{D}_g$ is a graded-division algebra (where $G=\mathrm{Supp}\,\mathcal{D}$) and $\inv$ is a $\tau$-involution on $\mathcal{D}$, then $\tau$ is an involution of $G$. Indeed, for any $g$, $h\in G$, let $x_g$ and $x_h$ be nonzero homogeneous elements of $G$-degrees $g$ and $h$, respectively. Then $xy\ne0$ and
$$
\tau(gh)=\deg_G\inv(x_gx_h)=\deg_G(\inv(x_h)\inv(x_g))=\deg_G\inv(x_h)\deg_G\inv(x_g)=\tau(h)\tau(g),
$$
$$
g=\deg_Gx_g=\deg_g\inv(\inv(x_g))=\tau(\tau(g)).
$$
Thus, $\tau$ is an anti-automorphism of order $2$.
\end{enumerate}
\end{Example}

\subsection{Factor sets} We let $T$ be a finite group, and $\mathbb{F}^\times$ denote the set of invertible elements of $\mathbb{F}$.

A map $\sigma:T\times T\to\mathbb{F}^\times$, is called a \emph{2-cocycle} or a \emph{factor set} if
$$
\sigma(u,v)\sigma(uv,w)=\sigma(u,vw)\sigma(v,w),\quad\forall u,v,w\in T.
$$
We denote the set of all 2-cocycles by $Z^2(T,\mathbb{F}^\times)$. Note that, using the usual multiplication, $Z^2(T,\mathbb{F}^\times)$ is an abelian group.

For each map $\mu:T\to\mathbb{F}^\times$, we define $\delta\mu:T\times T\to\mathbb{F}^\times$ by
$$
\delta\mu(u,v)=\mu(u)\mu(v)\mu(uv)^{-1},\quad u,v\in T.
$$
We define $B^2(T,\mathbb{F}^\times)=\{\delta\mu\mid\mu:T\to\mathbb{F}^\times\}$. An easy exercise shows that $B^2(T,\mathbb{F}^\times)$ is a subgroup of $Z^2(T,\mathbb{F}^\times)$. The 2nd cohomological group of $T$ is the quotient $H^2(T,\mathbb{F}^\times)=Z^2(T,\mathbb{F}^\times)/B^2(T,\mathbb{F}^\times)$.

We can construct algebras from factor sets. Given an arbitrary map $\sigma:T\times T\to\mathbb{F}^\times$ denote by $\mathbb{F}^\sigma T$ the following algebra: $\mathbb{F}^\sigma T$ has a basis $\{X_u\mid u\in T\}$, and the product is defined by $X_uX_v=\sigma(u,v)X_{uv}$. Note that $\mathbb{F}^\sigma T$ is associative if and only if $\sigma\in Z^2(T,\mathbb{F}^\times)$. For instance, if $\sigma=1$ (the constant function), then $\mathbb{F}^\sigma T$ is the group algebra of $T$. Clearly such algebra have a natural $T$-grading. It is known that $\mathbb{F}^{\sigma}T\cong\mathbb{F}^{\sigma'}T$ if and only if $[\sigma]=[\sigma']$ (equality in $H^2(T,\mathbb{F}^\times)$.

\subsection{Graded division algebra}
 Assume that $\mathbb{F}$ is algebraically closed and let $\mathcal{D}=\bigoplus_{g\in G}\mathcal{D}_g$ be a finite-dimensional graded division algebra over $\mathbb{F}$. Let $T=\{g\in G\mid\mathcal{D}_g\ne0\}$ be its support. Then it is easy to see that $T$ is a subgroup of $G$. We use multiplicative notation for the product of $T$, and denote by $1$ its neutral element.

Moreover, $\mathcal{D}_1\supseteq\mathbb{F}$ is a division algebra. So $\mathcal{D}_1=\mathbb{F}$, since $\mathbb{F}$ is algebraically closed and $\dim_\mathbb{F}\mathcal{D}_1<\infty$. This also implies $\dim\mathcal{D}_g=1$, for all $g\in T$. Let $\{X_u\mid u\in T\}$ be a homogeneous basis of $\mathcal{D}$. Then $X_uX_v=\sigma(u,v)X_{uv}$, for some $\sigma(u,v)\in\mathbb{F}^\times$. Since $\mathcal{D}$ is associative, from $(X_uX_v)X_w=X_u(X_vX_w)$, we derive that $\sigma$ is a 2-cocycle. Hence, $\mathcal{D}\cong\mathbb{F}^\sigma T$, the twisted group algebra of $T$ by $\sigma$. Conversely, for any finite group $T$ and any $\sigma\in Z^2(T,\mathbb{F}^\times)$, the natural $T$-grading on $\mathbb{F}^\sigma T$ turns it into a graded division algebra.

\subsection{Free graded algebra with homogeneous involution}
We shall provide a construction of the free algebra endowed with a homogeneous involution. This is done using a particular case of the (relatively) free universal algebra in an adequate variety (see, for instance, \cite{Razmyslov} for a general discussion, and \cite{BY,BY2} as well for a particular graded version). Let $G$ be any group, and $X^G=\bigcup_{g\in G}X^{(g)}$, where $X^{(g)}=\{x_1^{(g)},x_2^{(g)},\ldots\}$. Let $\tau:G\to G$ be an involution, that is, an anti-automorphism of order $2$. We define the \emph{free $G$-graded associative algebra with a homogeneous involution} with respect to $\tau$, $\mathbb{F}\langle X^G,\inv\rangle$, in the following way. First, we let $\mathbb{F}\{X^G,\inv\}$ denote the absolute free $G$-graded binary algebra endowed with an unary operation (denote by $\inv$). We define the following polynomial identities
\begin{align}\label{eqvariety}
\begin{split}
x_1^{(g_1)}(x_2^{(g_2)}x_3^{(g_3)})-(x_1^{(g_1)}x_2^{(g_2)})x_3^{(g_3)}&=0\\
\inv(x_1^{(g_1)}x_2^{(g_2)})-\inv(x_2^{(g_2)})\inv(x_1^{(g_1)})&=0\\
\inv(\inv(x^{(g)}))-x^{(g)}&=0\\
\deg_G\inv(x^{(g)})=\tau(\deg_Gx^{(g)}).
\end{split}
\end{align}
The first relation is the associativity while the second and third indicate that $\inv$ acts as an involution. The last relation is also a polynomial identity, but to see this we need to describe the $G$-grading in terms of the projections (see, for instance, \cite{BY}). For each $g\in G$, let $\pi_g$ denote the unary operation on a $G$-graded algebra $\mathcal{A}$ given by the projection and inclusion $\pi_g:\mathcal{A}\to\mathcal{A}$. Then, the absolute free $G$-graded algebra $\mathbb{F}\{X^G,\inv\}$ is a quotient of the absolute free $\Omega$-algebra, where $\Omega$ contains one binary operation and $|G|+1$ unary operations (corresponding to each projection, and the involution). The quotient is given by the relations that define the $G$-grading, that is, $\pi_g(\pi_h(x))=\delta_{gh}\pi_h(x)$ and $\pi_g(\pi_{g_1}(x)\pi_{g_2}(y))=\delta_{g,g_1g_2}\pi_{g}(xy)$. Hence, in the language of this $\Omega$-algebra, the last equation of \eqref{eqvariety} is equivalent to
$$
\inv(\pi_g(x))-\pi_{\tau(g)}(\inv(x))=0.
$$
Using either the absolute free $G$-graded algebra or the (relatively) free $\Omega$-algebra, the free $G$-graded algebra with a $\tau$-involution $\mathbb{F}\langle X^G,\inv\rangle$ is the quotient of $\mathbb{F}\{X^G,\inv\}$ by the identities \eqref{eqvariety}. As mentioned before, the forth identity is natural in the context of graded polynomial identities with an involution.

As discussed in \cite{Mello2}, in the special case where $\inv$ is a degree-preserving involution, then we can define the new variables $x_+^{(g)}:=x^{(g)}+\inv(x^{(g)})$ and $x_-^{(g)}=x^{(g)}-\inv(x^{(g)})$ (the symmetric and skew symmetric variables). Then we get the classical construction of the free (graded) $\ast$-algebra. Since $\inv$ does not necessarily preserve the homogeneous degree, we cannot use such technique, since these variables would not be homogeneous.

Given a $G$-graded algebra $\mathcal{A}$ with a homogeneous involution $\inv$, we denote by $\mathrm{Id}_G(\mathcal{A})$ its ideal of graded polynomial identities, and by $\mathrm{Id}_{G,\inv}(\mathcal{A})$ the set of all of its graded polynomial identities with involution $\inv$.

\subsection{Codimension sequence}
Let $T$ be a finite group and consider the natural $T$-grading on $\mathbb{F}^\sigma T$. Given a sequence $s=(u_1,\ldots,u_m)\in T^m$, we let
$$
P^s_m=\mathrm{Span}\left\{x_{\pi(1)}^{(u_{\pi(1)})}\cdots x_{\pi(m)}^{(u_{\pi(m)})}\mid\pi\in\mathcal{S}_m\right\},\quad P^s_m(\mathbb{F}^\sigma T)=P^s_m/P^s_m\cap\mathrm{Id}_T(\mathbb{F}^\sigma T),
$$
where $\mathcal{S}_m$ is the symmetric group on the set of $m$ elements. The \emph{graded codimension sequence} of $\mathbb{F}^\sigma T$ is
$$
c^T_m(\mathbb{F}^\sigma T)=\dim\sum_{s\in T^m}P^s_m(\mathbb{F}^\sigma T).
$$
Now, given $I=(j_1,\ldots,j_m)\in\{0,1\}^m$, let
$$
P_{s,I}^T=\mathrm{Span}\left\{\inv^{j_1}\left(x_{\pi(1)}^{(u_{\pi(1)})}\right)\cdots\inv^{j_m}\left(x_{\pi(m)}^{(u_{\pi(m)})}\right)\mid\pi\in\mathcal{S}_m\right\}.
$$
As before, we let
$$
P_{s,I}^T(\mathbb{F}^\sigma T)=P_{s,I}^T/P_{s,I}^T\cap\mathrm{Id}_{T,\inv}(\mathbb{F}^\sigma T),\quad c_m^{T,\inv}(\mathbb{F}^\sigma T)=\dim\sum_{s,I}P_{s,I}^T(\mathbb{F}^\sigma T).
$$
The graded exponent and the graded-involution exponent (if exists) are respectively defined by
$$
\mathrm{exp}^T(\mathbb{F}^\sigma T)=\lim\limits_{m\to\infty}\sqrt[m]{c_m^T(\mathbb{F}^\sigma T)},\quad\mathrm{exp}^{T,\inv}(\mathbb{F}^\sigma T)=\lim\limits_{m\to\infty}\sqrt[m]{c_m^{T,\inv}(\mathbb{F}^\sigma T)}
$$

\section{Homogeneous involution on graded division algebra}
We let $\mathbb{F}$ be a field of characteristic not 2. Let $T$ be a finite group. Given $\sigma\in Z^2(T,\mathbb{F}^\ast)$, we consider the natural $T$-grading on $\mathbb{F}^\sigma T$.


We shall denote by $\mathrm{Aut}(T)$ the group of all automorphisms of $T$, and by $\overline{\mathrm{Aut}}(T)$ the group of all automorphism and antiautomorphism of $T$. For each $\varphi\in\mathrm{Aut}(T)$, let $\varphi\sigma:T\times T\to\mathbb{F}^\ast$ be defined by
$$
\varphi\sigma(u,v)=\sigma(\varphi(u),\varphi(v)).
$$
On the other hand, if $\psi\in\overline{\mathrm{Aut}}(T)$ is an antiautomorphism, then let $\psi\sigma:T\times T\to\mathbb{F}^\times$ be defined by
$$
\psi\sigma(u,v)=\sigma(\psi(v),\psi(u)).
$$
The next lemma is an elementary exercise. We include the proof for completeness.
\begin{Lemma}
For each $\theta\in\overline{\mathrm{Aut}}(T)$ and $\sigma'\in Z^2(T,\mathbb{F}^\times)$, $\theta\sigma'\in Z^2(T,\mathbb{F}^\times)$.
\end{Lemma}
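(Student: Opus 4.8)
The plan is to verify the $2$-cocycle identity
$$
\rho(u,v)\rho(uv,w)=\rho(u,vw)\rho(v,w),\qquad u,v,w\in T,
$$
directly for $\rho=\theta\sigma'$, splitting into two cases according to whether $\theta$ is an automorphism or an antiautomorphism of $T$. In both cases the point is that $\theta$ merely transports the cocycle identity already satisfied by $\sigma'$ to a new triple of arguments.

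First suppose $\theta=\varphi\in\mathrm{Aut}(T)$, so that $\varphi\sigma'(u,v)=\sigma'(\varphi(u),\varphi(v))$ and $\varphi(uv)=\varphi(u)\varphi(v)$. Substituting into the left-hand side and applying the cocycle identity for $\sigma'$ at the triple $(\varphi(u),\varphi(v),\varphi(w))$ yields the identity for $\varphi\sigma'$ at $(u,v,w)$ with no reordering of arguments required; this case is immediate.

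Now suppose $\theta=\psi\in\overline{\mathrm{Aut}}(T)$ is an antiautomorphism, so that $\psi\sigma'(u,v)=\sigma'(\psi(v),\psi(u))$ and $\psi(uv)=\psi(v)\psi(u)$. Expanding the two sides of the desired identity for $\psi\sigma'$ at $(u,v,w)$ and writing $a=\psi(w)$, $b=\psi(v)$, $c=\psi(u)$, the equality to be proved becomes precisely
$$
\sigma'(b,c)\,\sigma'(a,bc)=\sigma'(ab,c)\,\sigma'(a,b),
$$
which is the cocycle identity for $\sigma'$ at $(a,b,c)$ (using that the values lie in the commutative group $\mathbb{F}^\times$). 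Hence $\psi\sigma'\in Z^2(T,\mathbb{F}^\times)$ as well.

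The only subtlety, and thus the only place where a careless computation could go astray, is the bookkeeping of the order reversal in the antiautomorphism case; once one commits to the substitution $a=\psi(w)$, $b=\psi(v)$, $c=\psi(u)$ and uses $\psi(uv)=\psi(v)\psi(u)$ consistently, both sides collapse to the two sides of the cocycle identity for $\sigma'$ and there is nothing more to do. One could alternatively package the statement by observing that $\overline{\mathrm{Aut}}(T)$ thereby acts on $Z^2(T,\mathbb{F}^\times)$, and an analogous computation shows it preserves $B^2(T,\mathbb{F}^\times)$ and so descends to an action on $H^2(T,\mathbb{F}^\times)$; but that refinement is not needed for the lemma as stated.
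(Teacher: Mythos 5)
Your proof is correct and follows essentially the same route as the paper: the automorphism case is immediate, and the antiautomorphism case is handled by the substitution $a=\psi(w)$, $b=\psi(v)$, $c=\psi(u)$, which turns the desired identity into the cocycle identity for $\sigma'$ at $(a,b,c)$ --- exactly the computation in the paper's proof. Nothing further is needed.
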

\begin{proof}
Assume that $\theta$ is an antiautomorphism. Then, for any $u$, $v$, $w\in T$, we have
\begin{align*}
\theta\sigma'(u,v)\theta\sigma'(uv,w)&=\sigma'(\theta(v),\theta(u))\sigma'(\theta(w),\theta(uv))\\%
&=\sigma'(\theta(w),\theta(v))\sigma'(\theta(w)\theta(v),\theta(u))\\%
&=\theta\sigma'(v,w)\theta\sigma'(u,vw).
\end{align*}
In an analogous way we show that $\theta\sigma'\in Z^2(T,\mathbb{F}^\times)$ if $\theta$ is an automorphism of $T$.
\end{proof}
Hence, it is easy to see that we have an action of $\overline{\mathrm{Aut}}(T)$ on $Z^2(T,\mathbb{F}^\times)$. This action factors through $B^2(T,\mathbb{F}^\times)$:
\begin{Lemma}
If $\sigma'\in B^2(T,\mathbb{F}^\times)$ and $\theta\in\overline{\mathrm{Aut}}(T)$, then $\theta\sigma'\in B^2(T,\mathbb{F}^\times)$.
\end{Lemma}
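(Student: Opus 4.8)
The plan is to produce an explicit coboundary witnessing $\theta\sigma'\in B^2(T,\mathbb{F}^\times)$, built directly from the one witnessing $\sigma'\in B^2(T,\mathbb{F}^\times)$. Write $\sigma'=\delta\mu$ for some map $\mu:T\to\mathbb{F}^\times$, and set $\nu:=\mu\circ\theta:T\to\mathbb{F}^\times$. I claim $\theta\sigma'=\delta\nu$, which proves the lemma.

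First I would treat the case in which $\theta$ is an automorphism. Here, for $u,v\in T$,
\[
\theta\sigma'(u,v)=\sigma'(\theta(u),\theta(v))=\mu(\theta(u))\,\mu(\theta(v))\,\mu(\theta(u)\theta(v))^{-1}
=\nu(u)\,\nu(v)\,\nu(uv)^{-1}=\delta\nu(u,v),
\]
using that $\theta$ is a homomorphism, so $\theta(u)\theta(v)=\theta(uv)$.

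Next I would treat the case in which $\theta$ is an antiautomorphism. Now $\theta(v)\theta(u)=\theta(uv)$, so
\[
\theta\sigma'(u,v)=\sigma'(\theta(v),\theta(u))=\mu(\theta(v))\,\mu(\theta(u))\,\mu(\theta(v)\theta(u))^{-1}
=\mu(\theta(v))\,\mu(\theta(u))\,\mu(\theta(uv))^{-1}.
\]
Since $\mathbb{F}^\times$ is abelian, the right-hand side equals $\nu(u)\,\nu(v)\,\nu(uv)^{-1}=\delta\nu(u,v)$, as desired.

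There is no real obstacle here; the only subtlety worth flagging is that in the antiautomorphism case the reordering $\mu(\theta(v))\mu(\theta(u))=\mu(\theta(u))\mu(\theta(v))$ is exactly where commutativity of $\mathbb{F}^\times$ is used, so the same argument would not carry over verbatim to coefficients in a nonabelian group. Combined with the previous lemma, this shows the $\overline{\mathrm{Aut}}(T)$-action on $Z^2(T,\mathbb{F}^\times)$ descends to a well-defined action on $H^2(T,\mathbb{F}^\times)$.
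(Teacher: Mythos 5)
Your proof is correct and is precisely the paper's argument: the paper states in one line that $\theta\delta\mu=\delta(\mu\circ\theta)$, and you have simply written out the verification in both the automorphism and antiautomorphism cases. The observation about where commutativity of $\mathbb{F}^\times$ enters is accurate but does not change the substance.
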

\begin{proof}
Let $\sigma'=\delta\mu$. Then $\theta\delta\mu=\delta(\mu\circ\theta)\in B^2(T,\mathbb{F}^\times)$.
\end{proof}

Thus, we get:
\begin{Cor}
There is an action of $\overline{\mathrm{Aut}}(T)$ on $H^2(T,\mathbb{F}^\times)$.\qed
\end{Cor}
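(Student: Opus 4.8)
The plan is simply to push the already-established action of $\overline{\mathrm{Aut}}(T)$ on $Z^2(T,\mathbb{F}^\times)$ down to the quotient group $H^2(T,\mathbb{F}^\times)=Z^2(T,\mathbb{F}^\times)/B^2(T,\mathbb{F}^\times)$. The inputs are the remark following the first Lemma (existence of the action on $Z^2(T,\mathbb{F}^\times)$) together with the second Lemma (invariance of $B^2(T,\mathbb{F}^\times)$ under that action); everything else is bookkeeping.

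First I would record that, for each fixed $\theta\in\overline{\mathrm{Aut}}(T)$, the map $\sigma'\mapsto\theta\sigma'$ is not merely a bijection of $Z^2(T,\mathbb{F}^\times)$ but a group automorphism of it: the multiplication in $Z^2(T,\mathbb{F}^\times)$ is pointwise, and substituting the arguments (with the extra swap when $\theta$ is an antiautomorphism) commutes with pointwise multiplication, so $\theta(\sigma_1'\sigma_2')=(\theta\sigma_1')(\theta\sigma_2')$; bijectivity, with inverse $\sigma'\mapsto\theta^{-1}\sigma'$, is immediate from the action axioms. By the second Lemma applied to both $\theta$ and $\theta^{-1}$, this automorphism sends $B^2(T,\mathbb{F}^\times)$ onto itself. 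Since $Z^2(T,\mathbb{F}^\times)$ is abelian, $B^2(T,\mathbb{F}^\times)$ is automatically normal in it, so $\sigma'\mapsto\theta\sigma'$ descends to a well-defined automorphism $[\sigma']\mapsto[\theta\sigma']$ of $H^2(T,\mathbb{F}^\times)$.

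It then remains only to check the formal properties of an action of $\overline{\mathrm{Aut}}(T)$ on $H^2(T,\mathbb{F}^\times)$: that the identity induces the identity and that compositions are respected. Both are inherited verbatim from the corresponding facts for the action on $Z^2(T,\mathbb{F}^\times)$, since the class map $Z^2(T,\mathbb{F}^\times)\to H^2(T,\mathbb{F}^\times)$ is surjective and intertwines the two actions. There is no genuine obstacle here; the only point deserving attention is that, because $\theta\sigma'$ is defined by precomposition with $\theta$, the action is naturally a right action of $\overline{\mathrm{Aut}}(T)$ on both $Z^2(T,\mathbb{F}^\times)$ and $H^2(T,\mathbb{F}^\times)$ — which is harmless, as the statement asserts only the existence of an action.
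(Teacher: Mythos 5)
Your proposal is correct and follows exactly the route the paper intends: the corollary is stated with no written proof precisely because it is the immediate descent of the $\overline{\mathrm{Aut}}(T)$-action on $Z^2(T,\mathbb{F}^\times)$ to the quotient $H^2(T,\mathbb{F}^\times)$, using the second Lemma to see that $B^2(T,\mathbb{F}^\times)$ is preserved. Your extra observations (that each $\theta$ acts by a group automorphism of the abelian group $Z^2(T,\mathbb{F}^\times)$, and that the action defined by precomposition is naturally a right action) are accurate and merely make explicit what the paper leaves to the reader.
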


Now, the next result states the conditions so that we do have a homogeneous involution on $\mathbb{F}^\sigma T$. For this, we need maps satisfying the following condition:
\begin{Def}\label{conj}
Let $\sigma\in Z^2(T,\mathbb{F}^\times)$ and $\tau\in\overline{\mathrm{Aut}}(T)$. We say that the pair $(\sigma,\tau)$ is \emph{compatible} if there is a map $\mu:T\to\mathbb{F}^\times$ such that $\sigma=\delta\mu\cdot\tau\sigma$ (where $\cdot$ is the product of $Z^2(T,\mathbb{F}^\times)$) and $\mu(u\tau(u))=1$, for all $u\in T$.
\end{Def}
For instance, let $\tau:u\in T\mapsto u^{-1}\in T$ be the inversion. Then, for any $\sigma\in Z^2(T,\mathbb{F}^\times)$ such that $[\sigma]^2=1$, the pair $(\sigma,\tau)$ is compatible (see the proof of \cite[Proposition 4]{FSY}).

\begin{Thm}
Let $\tau:T\to T$. Then there exists a $\tau$-homogeneous involution on $\mathbb{F}^\sigma T$ if and only if $\tau$ is an antiautomorphism of order $2$ and $(\sigma,\tau)$ is compatible.
\end{Thm}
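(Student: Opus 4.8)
The statement splits into two claims: that $\tau$ must be an anti-automorphism of $T$ of order $2$, and that $(\sigma,\tau)$ must be compatible. The first is already contained in the last item of the Examples of Section~2 (applied with the grading group equal to $T=\mathrm{Supp}\,\mathbb{F}^\sigma T$), so I would simply invoke it; the substance of the proof is the equivalence ``a $\tau$-homogeneous involution exists $\iff$ $(\sigma,\tau)$ is compatible'', which I would carry out under the standing hypothesis that $\tau\in\overline{\mathrm{Aut}}(T)$ reverses products and satisfies $\tau^2=\mathrm{id}$.

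The plan rests on the fact that every homogeneous component $\mathcal{D}_u=\mathbb{F}X_u$ is one-dimensional. Consequently an $\mathbb{F}$-linear map $\psi$ with $\psi(\mathcal{D}_u)\subseteq\mathcal{D}_{\tau(u)}$ is necessarily of the form $\psi(X_u)=\mu(u)X_{\tau(u)}$ for a function $\mu\colon T\to\mathbb{F}$, and conversely any such $\mu$ defines such a map; moreover $\psi$ is bijective precisely when $\mu$ takes values in $\mathbb{F}^\times$. Thus homogeneous $\mathbb{F}$-linear involutions correspond exactly to functions $\mu\colon T\to\mathbb{F}^\times$ for which $\psi$ is product-reversing and satisfies $\psi^2=\mathrm{id}$, and the whole proof consists in rewriting these two requirements as conditions on $\mu$.

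For ``product-reversing'' I would apply $\psi$ to $X_uX_v=\sigma(u,v)X_{uv}$, use $\tau(uv)=\tau(v)\tau(u)$, and compare the coefficients of $X_{\tau(v)\tau(u)}$ on the two sides of $\psi(X_uX_v)=\psi(X_v)\psi(X_u)$; this yields $\sigma(u,v)\mu(uv)=\mu(u)\mu(v)\sigma(\tau(v),\tau(u))$, that is, $\sigma=\delta\mu\cdot\tau\sigma$. For ``$\psi^2=\mathrm{id}$'', since $\tau^2=\mathrm{id}$ one computes $\psi^2(X_u)=\mu(u)\mu(\tau(u))X_u$, so the condition is $\mu(u)\mu(\tau(u))=1$ for all $u$. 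It then remains to match this with the normalization $\mu(u\tau(u))=1$ of Definition~\ref{conj}: putting $v=\tau(u)$ in $\sigma=\delta\mu\cdot\tau\sigma$ and using $\tau\sigma(u,\tau(u))=\sigma(\tau^2(u),\tau(u))=\sigma(u,\tau(u))$ gives $\delta\mu(u,\tau(u))=1$, hence $\mu(u\tau(u))=\mu(u)\mu(\tau(u))$, so the two normalizations agree whenever $\sigma=\delta\mu\cdot\tau\sigma$ holds. Reading these equivalences in both directions gives the theorem.

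I do not anticipate a real obstacle: everything is a direct unwinding of the definitions once the one-dimensionality of the components is exploited. The only point that needs a moment's care is the bookkeeping just described --- that $\psi^2=\mathrm{id}$ naturally produces $\mu(u)\mu(\tau(u))=1$ whereas compatibility is phrased with $\mu(u\tau(u))=1$ --- and this is settled by the one-line computation $\delta\mu(u,\tau(u))=1$.
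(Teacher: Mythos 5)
Your proposal is correct and follows essentially the same route as the paper: both reduce the existence of a homogeneous involution to a function $\mu:T\to\mathbb{F}^\times$ via $\psi(X_u)=\mu(u)X_{\tau(u)}$, translate anti-multiplicativity into $\sigma=\delta\mu\cdot\tau\sigma$ and $\psi^2=\mathrm{id}$ into $\mu(u)\mu(\tau(u))=1$, and reconcile the latter with the normalization $\mu(u\tau(u))=1$ by evaluating $\delta\mu$ at $(u,\tau(u))$ and using $\tau\sigma(u,\tau(u))=\sigma(u,\tau(u))$. The only cosmetic difference is that you cite the Examples of Section~2 for the fact that $\tau$ must be an anti-automorphism of order $2$, whereas the paper re-derives it inside the proof by the same computation.
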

\begin{proof}
 Let $\{X_u\mid u\in T\}$ be a homogeneous basis of $\mathbb{F}^\sigma T$. First, assume that $\inv$ is a $\tau$-homogeneous involution on $\mathbb{F}^\sigma T$. Let $\tau:T\to T$ and $\mu:T\to\mathbb{F}^\ast$ be the maps such that
$$
\inv(X_u)=\mu(u)X_{\tau(u)},\quad u\in T.
$$
Since $X_u=\inv(\inv X_u)=\mu(\tau(u))\mu(u)X_{\tau\tau(u)}$, one obtains $\tau^2=1$. It also implies that $\tau$ is a bijection. Since
\begin{equation}
\begin{split}\label{eq1}
\mu(uv)\sigma(u,v)X_{\tau(uv)}=\inv(X_uX_v)=\inv(X_v)\inv(X_u)\\=\sigma(\tau(v),\tau(u))\mu(v)\mu(u)X_{\tau(v)\tau(u)},
\end{split}
\end{equation}
we obtain $\tau(uv)=\tau(v)\tau(u)$. Hence, $\tau$ is an antiautomorphism. Note that \eqref{eq1} also shows that $\sigma=\delta\mu\cdot(\tau\sigma)$, thus $[\sigma]=[\tau\sigma]$. Finally, from $\inv(X_uX_{\tau(u)})=\inv(X_{\tau(u)})\inv(X_u)$, we get
$$
\mu(u\tau(u))\sigma(u,\tau(u))=\mu(u)\mu(\tau(u))\sigma(u,\tau(u)),
$$
thus $\mu(u\tau(u))=\mu(u)\mu(\tau(u))$. Since $X_u=\inv(\inv(X_u))=\mu(\tau(u))\mu(u)X_u$, we get that $\mu(u\tau(u))=1$, for all $u\in T$.

On the other hand, assume that $\tau$ is an antiautomorphism of order $2$ such that $(\sigma,\tau)$ is compatible. So let $\mu:G\to\mathbb{F}^\times$ satisfy $\sigma=\delta\mu\cdot\tau\sigma$. We claim that
$$
\inv(X_u):=\mu(u)X_{\tau(u)}
$$
is a $\tau$-homogeneous involution on $\mathbb{F}^\sigma T$. Indeed,
\begin{align*}
\inv(X_uX_v)&=\sigma(u,v)\mu(uv)X_{\tau(uv)}\\%
&=\sigma(u,v)\mu(uv)\sigma(\tau(v),\tau(u))^{-1}X_{\tau(v)}X_{\tau(u)}\\%
&=\mu(u)\mu(v)X_{\tau(v)}X_{\tau(u)}=\inv(X_v)\inv(X_u).
\end{align*}
Now,
$$
\inv(\inv(X_u))=\mu(u)\mu(\tau(u))X_{\tau^2(u)}=\mu(u)\mu(\tau(u))X_u.
$$
It remains to prove that $\mu(u)\mu(\tau(u))=1$. Since
$$
\mu(u)\mu(\tau(u))\mu(u\tau(u))^{-1}=\sigma(u,\tau(u))\tau\sigma(u,\tau(u))^{-1}=1,
$$
we obtain that $\mu(u)\mu(\tau(u))=\mu(u\tau(u))=1$, since $(\sigma,\tau)$ is compatible.
\end{proof}

\begin{Problem}
Classify the compatible pairs $(\sigma,\tau)$.
\end{Problem}

\section{Polynomial identities of graded division algebra}
In this section, we describe the graded polynomial identities with involution of $\mathbb{F}^\sigma T$. Let $\mathbb{F}$ be a field of characteristic zero and $\inv$ a $\tau$-homogeneous involution on $\mathbb{F}^\sigma T$. Let $U=(u_1, \ldots, u_m)\in T^m$, $I=(i_1,\ldots,i_m),I'=(i_1',\ldots,i_m')\in\{0,1\}^m$ and $\theta,\theta'\in\mathcal{S}_m$ be such that
\begin{equation}\label{eq2}
\tau^{i_{\theta'(1)}'}(u_{\theta'(1)})\cdots\tau^{i_{\theta'(m)}'}(u_{\theta'(m)})=\tau^{i_{\theta(1)}}(u_{\theta(1)})\cdots\tau^{i_{\theta(m)}}(u_{\theta(m)})
\end{equation}
Then we can find a constant $\alpha_{U,I,I',\theta,\theta'}\in\mathbb{F}$ such that
$$
\inv^{i'_{\theta'(1)}}(X_{u_{\theta'(1)}})\cdots\inv^{i'_{\theta'(m)}}(X_{u_{\theta'(m)}})=\alpha_{U,I,I',\theta,\theta'}\inv^{i_{\theta(1)}}(X_{u_{\theta(1)}})\cdots\inv^{i_{\theta(m)}}(X_{u_{\theta(m)}}).
$$
Hence, denoting $z_i=x_i^{(u_i)}$,
$$
f_{U,I,I',\theta,\theta'}=\inv^{i'_{\theta'(1)}}(z_{u_{\theta'(1)}})\cdots\inv^{i'_{\theta'(m)}}(z_{u_{\theta'(m)}})-\alpha_{U,I,\theta}\inv^{i_1}(z_{\theta(1)})\cdots\inv^{i_m}(z_{\theta(m)})
$$
is a $G$-graded polynomial identity with involution of $\mathbb{F}^\sigma T$, which shall be called an \emph{elementary} identity (following the graded case of \cite{AljHN}). Let $\mathscr{T}_U$ be the set of all triples $(U,I,I',\theta,\theta')$ such that \eqref{eq2} holds valid.

\begin{Thm}\label{mainthmPI}
$\mathrm{Id}_{T,\inv}(\mathbb{F}^\sigma T)$ is generated by $\{f_{U,I,I',\theta,\theta'}\mid(I,I',\theta,\theta')\in\mathscr{T}_U,\,|U|\le|T|\}$.
\end{Thm}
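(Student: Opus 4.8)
The plan is to argue in three stages: reduce to multilinear $T$-homogeneous identities, show that such an identity is a consequence of elementary identities of the same degree, and finally bound the degree of the generators by $|T|$.

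Since $\mathrm{char}\,\mathbb{F}=0$, the ideal $\mathrm{Id}_{T,\inv}(\mathbb{F}^\sigma T)$ is generated by its multilinear polynomials, and, because $\inv$ is homogeneous and $\mathbb{F}^\sigma T$ is $T$-graded, each multilinear identity splits into $T$-homogeneous summands that are again identities. So fix a multilinear, $T$-homogeneous identity $f$ of degree $m$ in variables $z_j=x_j^{(u_j)}$, and write $f=\sum c_{\pi,\epsilon}M_{\pi,\epsilon}$, where $M_{\pi,\epsilon}=\inv^{\epsilon_{\pi(1)}}(z_{\pi(1)})\cdots\inv^{\epsilon_{\pi(m)}}(z_{\pi(m)})$ runs over the pairs $(\pi,\epsilon)\in\mathcal{S}_m\times\{0,1\}^m$ whose monomial has a fixed $T$-degree $g$. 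Evaluating on the homogeneous basis, $M_{\pi,\epsilon}(X_{u_1},\dots,X_{u_m})=\lambda_{\pi,\epsilon}X_g$ with $\lambda_{\pi,\epsilon}\in\mathbb{F}^\times$, since every homogeneous element of a graded-division algebra is invertible; as each homogeneous component is one-dimensional and $\inv$ is linear, $f$ is an identity if and only if $\sum c_{\pi,\epsilon}\lambda_{\pi,\epsilon}=0$. Picking one monomial $M_{\pi_0,\epsilon_0}$ as reference, the elementary identities $f_{U,\epsilon_0,\epsilon,\pi_0,\pi}$ give $M_{\pi,\epsilon}\equiv(\lambda_{\pi,\epsilon}/\lambda_{\pi_0,\epsilon_0})M_{\pi_0,\epsilon_0}$ modulo the elementary identities of degree $m$; summing then shows $f\equiv0$ there. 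Hence $\mathrm{Id}_{T,\inv}(\mathbb{F}^\sigma T)$ is generated by \emph{all} its elementary identities, and it remains to show that those of degree $>|T|$ are consequences of those of degree $\le|T|$.

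For the degree reduction I would argue by induction on the number of variables, proving the stronger statement that for every $U$ and $g$ all multilinear monomials with base degrees $U$ and $T$-degree $g$ are congruent, up to a scalar, modulo the ideal $\mathcal{I}$ generated by the elementary identities of degree $\le|T|$ (for $|U|\le|T|$ this is immediate, since the relevant $f_{U,I,I',\theta,\theta'}$ is itself a generator of $\mathcal{I}$). The mechanism is that a letter, or a word, of $T$-degree $1$ evaluates up to a scalar to the central element $X_1$, which also satisfies $\inv(X_1)=X_1$; hence $w\cdot y-y\cdot w$ (for $w$ of degree $1$) and $\inv(z^{(1)})-z^{(1)}$ are elementary identities, of degrees $|w|+1$ and $1$. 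Given a monomial of degree $g$ with more than $|T|$ letters, a pigeonhole argument on the partial products of the effective degrees of the letters yields a proper contiguous sub-block of $T$-degree $1$. If some base degree $u_j$ equals $1$, the block can be taken to be the single variable $z_j$: normalize its occurrence by the degree-$1$ relation, commute it to the front by degree-$2$ relations, and apply the inductive hypothesis to the word in the remaining $m-1$ variables. If no base degree is $1$, the block has length $\ge 2$; one moves it to a prefix using commutation relations that the inductive hypothesis has already placed in $\mathcal{I}$, applies the inductive hypothesis to the block and to the complementary word separately, and is left to check that the result does not depend on the sub-block chosen.

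The hard part is precisely this last point. Because $T$ need not be abelian and the two monomials of an elementary identity may carry different exponent patterns $I\neq I'$, one cannot transform one into the other by purely local moves, so the naive peeling produces a normal form that a priori depends on which degree-$1$ sub-block is extracted; the heart of the proof is to show that all of these normal forms agree up to a scalar — informally, that degree-$1$ blocks can be merged with one another and commuted past everything else — which is exactly what the induction on the number of variables is set up to deliver, in the spirit of the purely graded computation of \cite{AljHN}.
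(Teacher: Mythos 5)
Your first two stages are sound and essentially reproduce the first half of the paper's proof: reduction to multilinear $T$-homogeneous polynomials, the observation that such a polynomial is an identity if and only if the evaluation-weighted sum of its coefficients vanishes (each monomial evaluating to a nonzero multiple of $X_g$ because $\mathbb{F}^\sigma T$ is graded-division with one-dimensional components), and the collapse of $f$, modulo the elementary identities of the same degree, to a scalar multiple of a single monomial, which cannot be an identity. So the reduction to ``all elementary identities'' is fine.

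The genuine gap is in the degree-reduction step, and it is exactly the point you flag and then wave at. Your plan is to write a long monomial as (contiguous degree-$1$ block)$\cdot$(complement) and apply the inductive hypothesis to each factor. But the two monomials of a given elementary identity will in general yield degree-$1$ blocks on \emph{different} subsets of the variables; and even if one insists on the same subset, those variables are scattered in the other monomial and cannot be gathered there, since only \emph{contiguous} substrings of degree $1$ are mobile modulo $\mathcal{I}$. The resulting ``normal forms'' therefore live on different factorizations of the variable set, and the inductive hypothesis --- which concerns monomials in strictly fewer variables --- says nothing about whether they agree; establishing that independence would require a separate confluence argument that your proposal does not supply. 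The paper sidesteps this entirely with a different contraction: the mobility of the degree-$1$ block in the second monomial (cyclic permutation, transport, application of $\inv$, all substitution instances of the degree-$\le 2$ generators such as $x^{(1)}y^{(h)}-y^{(h)}x^{(1)}$ and $\inv(x^{(1)})-x^{(1)}$, not of the inductive hypothesis as you suggest) is used only to force a pair $z_\ell z_{\ell+1}$, already adjacent in the first monomial, to appear adjacently and with a compatible $\inv$-pattern in the second; that pair is then replaced in \emph{both} monomials simultaneously by one variable $x^{(u_\ell u_{\ell+1})}$, exhibiting $p$ as a substitution instance of an elementary identity of degree $|U|-1$, to which induction applies. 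Because the same contraction is performed on both sides at once, no independence-of-choices lemma is needed; your argument is incomplete without one.
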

\begin{proof}
First, we shall prove that any multilinear polynomial identity is a consequence of the elementary ones. Then, we show that the elementary identities follow from the ones having length at most $|T|$. Let $\mathcal{I}$ be the $T_{T,\inv}$-ideal generated by $\{f_{U,I,I',\theta,\theta'}\mid(I,I',\theta,\theta')\in\mathscr{T}_U,\,|U|\le|T|\}$, and $\mathcal{J}$ be the $T_{T,\inv}$-ideal generated by $\{f_{U,I,I',\theta,\theta'}\mid(I,I',\theta,\theta')\in\mathscr{T}_U\}$.

Let $f\in\mathrm{Id}_{T,\inv}(\mathbb{F}^\sigma T)$ be a $G$-homogeneous polynomial. Since $\mathrm{char}\,\mathbb{F}=0$, we may assume that $f$ is multilinear. Write
\begin{equation}\label{summ}
f=\sum_{\substack{\theta\in\mathcal{S}_m\\I=(i_1,\ldots,i_m)}}\alpha_{\theta,I}p_{\theta,I},
\end{equation}
where $p_{\theta,I}=\inv^{i_{\sigma(1)}}(z_{\sigma(1)})\cdots\inv^{i_{\sigma(m)}}(z_{\sigma(m)})$. Since every monomial in \eqref{summ} satisfies $\tau^{i_{\sigma(1)}}(u_{\sigma(1)})\cdots\tau^{i_{\sigma(m)}}(u_{\sigma(m)})=\deg_Tf$, we see that $(I,I',\theta,\theta')\in\mathscr{T}_U$ for every pair of $(I,\theta)$ and $(I',\theta')$ appearing with nonzero coefficient in \eqref{summ}. Here, $U=(\deg_Tz_1,\ldots,\deg_Tz_m)$. Hence, modulo $\mathcal{J}$, $f$ is equal to a single monomial, up to a scalar. As $\mathbb{F}^\sigma T$ is a graded division algebra, a monomial cannot be a graded polynomial identity of it. Thus, $f=0$ modulo $\mathcal{J}$, so $f\in\mathcal{J}$.

Finally, let $p=f_{U,I,I',\theta,\theta'}$ with $|U|>|T|$. We shall prove by induction on $|U|$ that $f\in\mathcal{I}$. For the sake o simplicity, we may replace $\theta$ by the identity, and then $\theta'$ becomes $\theta''=\theta'\theta^{-1}$, and we may assume that $I=(0,0,\ldots,0)$ and $I'$ is replaced by $I''=(i_1'+i_1,\ldots,i_m'+i_m)$ (where the sum is taken modulo $2$). Formally, we shall call $z_j=\inv^{i_j}(x_{\theta(j)}^{u_{\theta(j)}})$, and then
$$
p=z_1\cdots z_m-\inv^{i_1''}(z_{\theta''(1)})\cdots\inv^{i_m''}(z_{\theta''(m)}).
$$
To make notations cleaner, we may also suppress the double prime, so we shall write $p=f_{U,(0,\ldots,0),I,1,\theta}$. Denote also $v_i=\tau(u_{\theta(i)})$.

Consider the elements $\{v_1,v_1v_2,\ldots,v_1v_2\cdots v_m\}$, having exactly $|U|>|T|$ elements. By the pigeonhole principle, there should be two distinct sequences whose product coincides. Thus, there exists $1<i<j\le m$ such that $v_iv_{i+1}\cdots v_j=1$. Since $(v_i\cdots v_{j-1})v_j=v_j(v_i\cdots v_{j-1})=1$, we may, modulo $\mathcal{I}$, cyclically permute the variables $\inv^{i_i}(z_i)\cdots\inv^{i_j}(z_j)$. As $(v_i\cdots v_j)v_k=v_k(v_i\cdots v_j)$ for any $k$, we may also move the string $\inv^{i_i}(z_i)\cdots\inv^{i_j}(z_j)$ anywhere in the last monomial, modulo $\mathcal{I}$. Finally, since $\tau(1)=1$, we may apply $\inv$ to $\inv^{i_i}(z_i)\cdots\inv^{i_j}(z_j)$, modulo $\mathcal{I}$.

Hence, modulo $\mathcal{I}$, we may assume that there exists $\ell$ such that $z_\ell$ and $z_{\ell+1}$ appears consecutively (in this order) in the second monomial of $p$, and $\inv$ is applied in the product or $z_\ell z_{\ell+1}$ or not. More precisely, modulo $\mathcal{I}$, either
$$
p=w_1z_{\ell}z_{\ell+1}w_2-w_1'z_{\ell}z_{\ell+1}w_2'
$$
or
$$
p=w_1z_{\ell}z_{\ell+1}w_2-w_1'\inv(z_{\ell}+1)\inv(z_{\ell})w_2'.
$$
Defining $g=u_\ell u_{\ell+1}$ we have that $p$ is a consequence of either one of the elementary identities $w_1x^{(g)}w_2-w_1'x^{(g)}w_2'$ or $w_1x^{(g)}w_2-w_1'\inv\left(x^{(g)}\right)w_2'$. In any case, $p$ is a consequence of an elementary identity of total degree $|U|-1$. By induction, this implies that $p\in\mathcal{I}$, proving the result.
\end{proof}

Finally, we shall obtain some estimates to the graded-involution codimension sequence of the $\mathbb{F}^\sigma T$. We shall prove the following:
\begin{Thm}\label{mainthmcodim}
Let $T$ be a finite group, and $\sigma\in Z^2(T,\mathbb{F}^\times)$. Assume that $\mathbb{F}^\sigma T$ admits a $\tau$-homogeneous involution $\inv$. Then
\begin{enumerate}
\item $c_m^T(\mathbb{F}^\sigma T)\le c_m^{T,\inv}(\mathbb{F}^\sigma T)\le|T|c_m^T(\mathbb{F}^\sigma T)$, $\forall m\in\mathbb{N}$,
\item For all $m\in\mathbb{N}$,
$$
|T|^m\le c_m^{T,\inv}(\mathbb{F}^\sigma T)\le|T'||T|^{m+1},
$$
where $T'=[T,T]$.
\end{enumerate}
\end{Thm}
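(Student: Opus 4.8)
The plan is to reduce both statements to a dimension count carried out one degree sequence at a time, by evaluating multilinear polynomials into $\mathbb{F}^\sigma T$ itself. Fix $s=(u_1,\dots,u_m)\in T^m$ and set $z_i=x_i^{(u_i)}$. The crucial preliminary observation is that, because $\dim_\mathbb{F}\mathcal D_g=1$ for every $g\in T$ and $\inv$ is $\mathbb{F}$-linear, a multilinear graded polynomial (resp.\ a multilinear graded $\inv$-polynomial) $f$ in $z_1,\dots,z_m$ lies in $\mathrm{Id}_T(\mathbb{F}^\sigma T)$ (resp.\ $\mathrm{Id}_{T,\inv}(\mathbb{F}^\sigma T)$) if and only if $f$ vanishes at the single point $(X_{u_1},\dots,X_{u_m})$: an admissible substitution replaces $z_i$ by $c_iX_{u_i}$ for scalars $c_i$ (forcing $\inv(z_i)\mapsto c_i\inv(X_{u_i})$), and multilinearity makes the resulting value equal to $(c_1\cdots c_m)\,f(X_{u_1},\dots,X_{u_m})$; since $\mathrm{char}\,\mathbb{F}=0$ we may always pass to multilinear identities. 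Hence the evaluation $z_i\mapsto X_{u_i}$ induces \emph{injective} linear maps $P^s_m(\mathbb{F}^\sigma T)\hookrightarrow\mathbb{F}^\sigma T$ and $\sum_I P^T_{s,I}(\mathbb{F}^\sigma T)\hookrightarrow\mathbb{F}^\sigma T$.

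Tracking degrees, the monomial $\inv^{j_1}(z_{\pi(1)})\cdots\inv^{j_m}(z_{\pi(m)})$ is sent to a scalar multiple of $X_g$ with $g=\tau^{j_1}(u_{\pi(1)})\cdots\tau^{j_m}(u_{\pi(m)})$; when all $j_k=0$, $g=u_{\pi(1)}\cdots u_{\pi(m)}$ lies in the single coset $(u_1\cdots u_m)[T,T]$, because reordering a product of group elements changes it only by a product of commutators. Therefore
$$1\le\dim P^s_m(\mathbb{F}^\sigma T)\le |T'|,\qquad \dim\sum_I P^T_{s,I}(\mathbb{F}^\sigma T)\le\dim\mathbb{F}^\sigma T=|T|,$$
the left-hand lower bound holding because a single monomial is never an identity of the graded division algebra $\mathbb{F}^\sigma T$. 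Finally, for distinct $s$ the spaces $P^s_m(\mathbb{F}^\sigma T)$ sit in distinct multihomogeneous components of the relatively free graded algebra $\mathbb{F}\langle X^T\rangle/\mathrm{Id}_T(\mathbb{F}^\sigma T)$, so $c^T_m(\mathbb{F}^\sigma T)=\sum_{s\in T^m}\dim P^s_m(\mathbb{F}^\sigma T)$; together with the displayed estimates this yields $|T|^m\le c^T_m(\mathbb{F}^\sigma T)\le |T'|\,|T|^m$.

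Now for the four inequalities. For the left inequality of (1), note $\mathrm{Id}_T(\mathbb{F}^\sigma T)=\mathrm{Id}_{T,\inv}(\mathbb{F}^\sigma T)\cap\mathbb{F}\langle X^T\rangle$, which gives an embedding of $\mathbb{F}\langle X^T\rangle/\mathrm{Id}_T(\mathbb{F}^\sigma T)$ into $\mathbb{F}\langle X^T,\inv\rangle/\mathrm{Id}_{T,\inv}(\mathbb{F}^\sigma T)$ carrying $\sum_s P^s_m(\mathbb{F}^\sigma T)$ into the part of $\sum_{s,I}P^T_{s,I}(\mathbb{F}^\sigma T)$ with $I=(0,\dots,0)$, whence $c^T_m\le c^{T,\inv}_m$. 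For the right inequality of (1), grouping the spanning monomials by $s$ gives $\sum_{s,I}P^T_{s,I}(\mathbb{F}^\sigma T)=\sum_{s\in T^m}\bigl(\sum_I P^T_{s,I}(\mathbb{F}^\sigma T)\bigr)$, hence
$$c^{T,\inv}_m(\mathbb{F}^\sigma T)\le\sum_{s\in T^m}\dim\sum_I P^T_{s,I}(\mathbb{F}^\sigma T)\le\sum_{s\in T^m}|T|=|T|^{m+1}\le |T|\,c^T_m(\mathbb{F}^\sigma T),$$
the last step using $c^T_m\ge|T|^m$. Then (2) is immediate: $c^{T,\inv}_m\ge c^T_m\ge|T|^m$, and $c^{T,\inv}_m\le |T|\,c^T_m\le |T|\cdot|T'|\,|T|^m=|T'|\,|T|^{m+1}$.

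The one place where care is needed is the first paragraph: that a multilinear (graded) $\inv$-identity of $\mathbb{F}^\sigma T$ is already detected at the single point $(X_{u_1},\dots,X_{u_m})$. This is exactly where one-dimensionality of the homogeneous components, $\mathbb{F}$-linearity of $\inv$, and the reduction to multilinear identities in characteristic zero all enter; everything afterwards is bookkeeping with products in $T$ and cosets of $[T,T]$.
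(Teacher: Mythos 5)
Your proof is correct and follows essentially the same route as the paper: the same per-sequence decomposition $c^T_m=\sum_s\dim P^s_m$, the same coset-of-$[T,T]$ argument giving $1\le\dim P^s_m\le|T'|$, and the same ``at most $|T|$ possible degrees'' bound for the $\inv$-decorated monomials. The only difference is cosmetic --- you make explicit the single-point evaluation criterion (injectivity of evaluation at $(X_{u_1},\ldots,X_{u_m})$), which the paper leaves implicit by citing the proof of Theorem~\ref{mainthmPI}, and you organize the upper bound in (1) per sequence $s$ (via $c^{T,\inv}_m\le|T|^{m+1}$) rather than per basis monomial.
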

\begin{proof}
For each sequence $s=(u_1,\ldots,u_m)\in T^m$, let
$$
B_s=\{u_{\pi(1)}\cdots u_{\pi(m)}\mid\pi\in\mathcal{S}_m\},
$$
and let (in $\mathbb{F}\langle X\rangle/\mathrm{Id}^{T,\inv}(\mathbb{F}^\sigma T)$)
$$
P_s^T(\mathbb{F}^\sigma T)=\mathrm{Span}\left\{x_{\pi(1)}^{(u_{\pi(1)})}\cdots x_{\pi(m)}^{(u_{\pi(m)})}\mid\pi\in\mathcal{S}_m\right\}.
$$
Since $x_{\pi(1)}^{(u_{\pi(1)})}\cdots x_{\pi(m)}^{(u_{\pi(m)})}=x_{\theta(1)}^{(u_{\theta(1)})}\cdots x_{\theta(m)}^{(u_{\theta(m)})}$ if and only if $u_{\pi(1)}\cdots u_{\pi(m)}=u_{\theta(1)}\cdots u_{\theta(m)}$ (see the proof of Theorem \ref{mainthmPI}), we see that
$$
\dim P_s^T(\mathbb{F}^\sigma T)=|B_s|.
$$
Hence, we may conclude that
\begin{equation}\label{eqn}
c_m^T(\mathbb{F}^\sigma T)=\sum_{s\in T^m}\dim P_s^T(\mathbb{F}^\sigma T)=\sum_{s\in T^m}|B_s|.
\end{equation}

Now, for each sequence $s\in T^m$, note that the elements of $B_s$ are constant modulo $[T,T]$ (since $T/T'$ is an abelian group). Thus, $B_s$ assumes at most $|T'|$ elements, that is, $1\le|B_s|\le|T'|$. From \eqref{eqn}, this gives us
$$
|T|^m\le c_m^{T}(\mathbb{F}^\sigma T)\le |T'||T|^m.
$$

Finally, consider a basis of $P^T_m(\mathbb{F}^\sigma T)$ consisting of monomials. Given $x_{i_1}^{(u_1)}\cdots x_{i_m}^{(u_m)}$, by the proof of Theorem \ref{mainthmPI}, the set
$$
\{\inv^{j_1}(x_{i_1}^{(u_1)})\cdots\inv^{j_m}(x_{i_m}^{(u_m)})\mid j_1,\ldots,j_m\in\{0,1\}\}
$$
contains at most $|T|$ distinct elements modulo $\mathrm{Id}_{T,\inv}(\mathbb{F}^\sigma T)$. Hence, $c_m^{T,\inv}(\mathbb{F}^\sigma T)\le|T|c_m^T(\mathbb{F}^\sigma T)$.
\end{proof}

As a consequence, we obtain the graded-involution exponent of $\mathbb{F}^\sigma T$:
\begin{Cor}
Let $T$ be a finite group, $\sigma\in Z^2(T,\mathbb{F}^\times)$ and $\inv$ a homogeneous involution on $\mathbb{F}^\sigma T$. Then $\mathrm{exp}^{T,\inv}(\mathbb{F}^\sigma T)=|T|$.\qed
\end{Cor}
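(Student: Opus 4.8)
The plan is to read off the statement directly from the two-sided estimate established in Theorem~\ref{mainthmcodim}(2). First I would recall that this theorem gives, for every $m \in \mathbb{N}$,
$$
|T|^m \le c_m^{T,\inv}(\mathbb{F}^\sigma T) \le |T'|\,|T|^{m+1},
$$
where $T' = [T,T]$ is a fixed finite group independent of $m$; the hypothesis that $\mathbb{F}^\sigma T$ admits a $\tau$-homogeneous involution $\inv$ is exactly what makes this estimate applicable (via Theorem~\ref{mainthmPI}).

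The next step is simply to extract $m$-th roots. This yields
$$
|T| \le \sqrt[m]{c_m^{T,\inv}(\mathbb{F}^\sigma T)} \le \sqrt[m]{|T'|}\cdot|T|^{\frac{m+1}{m}}.
$$
Since $|T'|$ is a fixed positive integer, $\sqrt[m]{|T'|} \to 1$ as $m \to \infty$, and likewise $|T|^{(m+1)/m} = |T|\cdot|T|^{1/m} \to |T|$. Hence the lower bound is constantly $|T|$ while the upper bound converges to $|T|$, so by the squeeze theorem the sequence $\sqrt[m]{c_m^{T,\inv}(\mathbb{F}^\sigma T)}$ converges and its limit equals $|T|$, which is precisely the assertion $\mathrm{exp}^{T,\inv}(\mathbb{F}^\sigma T) = |T|$.

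I do not anticipate a genuine obstacle here: the corollary is a short limiting consequence of the codimension bounds already proved. The only subtlety worth flagging is that the definition of $\mathrm{exp}^{T,\inv}$ presupposes existence of the limit, and the squeeze argument supplies that existence automatically, so no separate subadditivity or Fekete-type argument is required. (Alternatively one could combine Theorem~\ref{mainthmcodim}(1) with the known graded exponent $\mathrm{exp}^T(\mathbb{F}^\sigma T) = |T|$, but the bound in part (2) is more self-contained.)
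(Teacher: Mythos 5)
Your proposal is correct and is exactly the intended argument: the paper states the corollary with no separate proof, treating it as an immediate consequence of Theorem~\ref{mainthmcodim}(2), and your squeeze argument (taking $m$-th roots of $|T|^m \le c_m^{T,\inv}(\mathbb{F}^\sigma T) \le |T'|\,|T|^{m+1}$ and letting $m\to\infty$) is the standard way to fill in that step, including the observation that it also establishes existence of the limit.
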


Specializing Theorem \ref{mainthmPI} in the special case where $T$ is an abelian group, we obtain the exact codimension sequence:
\begin{Cor}
If $T$ is an abelian group and $\inv$ is a degree-preserving involution on $\mathbb{F}^\sigma T$, then for each $m\in\mathbb{N}$,
$$
c_m^{T,\inv}(\mathbb{F}^\sigma T)=c_m^{T}(\mathbb{F}^\sigma T)=|T|^m.
$$\qed
\end{Cor}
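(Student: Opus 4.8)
The plan is to combine the codimension formula from the proof of Theorem~\ref{mainthmcodim} with the fact that, for a degree-preserving involution on a graded division algebra, the involution acts as a scalar on each one-dimensional homogeneous component, so that it contributes nothing new to the span of multilinear monomials.

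First I would record that since $T$ is abelian we have $\tau=\mathrm{id}_T$ and $T'=[T,T]=\{1\}$, so for every $s=(u_1,\dots,u_m)\in T^m$ the set $B_s=\{u_{\pi(1)}\cdots u_{\pi(m)}\mid\pi\in\mathcal{S}_m\}$ is the singleton $\{u_1\cdots u_m\}$; hence $|B_s|=1$ and, by \eqref{eqn}, $c_m^T(\mathbb{F}^\sigma T)=\sum_{s\in T^m}1=|T|^m$. By Theorem~\ref{mainthmcodim}(1), $c_m^{T,\inv}(\mathbb{F}^\sigma T)\ge c_m^T(\mathbb{F}^\sigma T)=|T|^m$, so it remains only to prove the reverse inequality. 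For this I would use that each homogeneous component $\mathcal{D}_u=\mathbb{F}X_u$ of $\mathbb{F}^\sigma T$ is one-dimensional, so $\inv$ restricts on $\mathcal{D}_u$ to multiplication by a scalar $\mu(u)\in\mathbb{F}^\times$, with $\mu(u)^2=1$ because $\inv^2=\mathrm{id}$. Thus $\inv\bigl(x^{(u)}\bigr)-\mu(u)\,x^{(u)}\in\mathrm{Id}_{T,\inv}(\mathbb{F}^\sigma T)$ for every $u\in T$, and since $\inv$ is an $\mathbb{F}$-linear antiautomorphism and $\tau=\mathrm{id}$, modulo $\mathrm{Id}_{T,\inv}(\mathbb{F}^\sigma T)$ every monomial $\inv^{j_1}\bigl(x_{i_1}^{(u_1)}\bigr)\cdots\inv^{j_m}\bigl(x_{i_m}^{(u_m)}\bigr)$ equals $\bigl(\prod_{k=1}^m\mu(u_k)^{j_k}\bigr)x_{i_1}^{(u_1)}\cdots x_{i_m}^{(u_m)}$. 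Consequently $\sum_{s,I}P_{s,I}^T(\mathbb{F}^\sigma T)=\sum_s P_s^T(\mathbb{F}^\sigma T)$ inside $\mathbb{F}\langle X\rangle/\mathrm{Id}_{T,\inv}(\mathbb{F}^\sigma T)$, whence $c_m^{T,\inv}(\mathbb{F}^\sigma T)=c_m^T(\mathbb{F}^\sigma T)=|T|^m$.

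I do not expect a genuine obstacle, since every step is immediate from the preceding results; the only point needing a little care is the reduction of an $\inv$-decorated monomial to a scalar multiple of the corresponding plain monomial of the same multidegree, which is precisely where abelianness (through $\tau=\mathrm{id}$) enters. Alternatively, one may bypass this reduction and simply note that the final counting step in the proof of Theorem~\ref{mainthmcodim} shows, in the abelian case, that $\{\inv^{j_1}(x_{i_1}^{(u_1)})\cdots\inv^{j_m}(x_{i_m}^{(u_m)})\mid j\in\{0,1\}^m\}$ spans a one-dimensional subspace modulo $\mathrm{Id}_{T,\inv}(\mathbb{F}^\sigma T)$, upgrading the bound $c_m^{T,\inv}\le|T|\,c_m^T$ to the equality $c_m^{T,\inv}=c_m^T$.
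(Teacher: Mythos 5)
Your proposal is correct and follows exactly the route the paper intends (the corollary is stated with no written proof, as an immediate consequence of the proof of Theorem~\ref{mainthmcodim}): abelianness gives $T'=\{1\}$, hence $|B_s|=1$ and $c_m^T(\mathbb{F}^\sigma T)=|T|^m$, while the degree-preserving involution acts as a scalar $\mu(u)$ on each one-dimensional component, so every $\inv$-decorated monomial is a scalar multiple of the plain one and the involution adds nothing to the codimension. The only detail worth spelling out is the one you already flag, namely that $\inv\bigl(x^{(u)}\bigr)-\mu(u)x^{(u)}\in\mathrm{Id}_{T,\inv}(\mathbb{F}^\sigma T)$ because $\inv$ is $\mathbb{F}$-linear; with that, your argument is complete.
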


\section*{Acknowledgements}
The author would like to thank L.~A.~Mendon\c ca for the useful discussion, in special, for providing the part (2) of Theorem \ref{mainthmcodim}. Thanks to T.~de Mello for showing interest to this topic and reading and providing a feedback that improved this manuscript.


\begin{thebibliography}{22}
\bibitem{AljHN} E.~Aljadeff, D.~Haile, M.~Natapov, \emph{Graded identities of matrix algebras and the Universal graded algebra}, Transactions of the American Mathematical Society \textbf{362} (2010), 3125--3147.
\bibitem{BY} Y.~Bahturin, F.~Yasumura, \emph{Distinguishing simple algebras by means of polynomial identities}, S\~ao Paulo Journal of Mathematical Sciences \textbf{13}(1) (2019), 39--72.
\bibitem{BY2} Y.~Bahturin, F.~Yasumura, \emph{Graded polynomial identities as identities of Universal algebras}, Linear Algebra and its Applications \textbf{562} (2019), 1--14.
\bibitem{Mello} T.~de Mello, \emph{Homogeneous involutions on upper triangular matrices}, Archiv der Mathematik \textbf{118} (2022), 365--374.
\bibitem{Mello1} L.~F.~Fonseca, T.~de Mello, \emph{Degree-inverting involutions on matrix algebras}, Linear and Multilinear Algebra \textbf{66} (2018), 1104--1120.
\bibitem{Mello2} L.~F.~Fonseca, T.~de Mello, \emph{Graded polynomial identities for matrices with the transpose involution over an infinite field}, Communications in Algebra \textbf{46} (2018), 1630--1640.
\bibitem{VinKoVa2004} O.~Di Vincenzo, P.~Koshlukov, A.~Valenti, \emph{Gradings on the algebra of upper triangular matrices and their graded identities}, Journal of Algebra \textbf{275} (2004), 550--566.
\bibitem{EK2013} A.~Elduque, M.~Kochetov, \emph{Gradings on simple Lie algebras}. Providence: American Mathematical Society (2013).
\bibitem{FSY} L.~S.~Fonseca, E.~Santulo Jr., F.~Yasumura, \emph{Degree-inverting involution on full square and triangular matrices}, Linear and Multilinear Algebra \textbf{70} (2022), 1980--1994.
\bibitem{HN} D.~Haile, M.~Natapov, \emph{Graded polynomial identities for matrices with the transpose involution}, Journal of Algebra \textbf{464} (2016), 175--197.
\bibitem{Hazrat} R.~Hazrat, \emph{Graded Rings and Graded Grothendieck Groups}. Cambridge: Cambridge University Press (2016).
\bibitem{Razmyslov}  I.~P.~Razmyslov, \emph{Identities of algebras and their representations}, No. 138. American Mathematical Society (1994).
\bibitem{VZ2007} A.~Valenti, M.~Zaicev, \emph{Group gradings on upper triangular matrices}. Archiv der Mathematik \textbf{89} (2007), 33--40.
\end{thebibliography}
\end{document}